\newtheorem{theorem}{Theorem}
\newtheorem{remark}{Remark}
\newtheorem{cor}{Corollary}
\newtheorem{lem}{Lemma}
\newtheorem{defn}{Definition}
\renewcommand{\epsilon}{{\varepsilon}}
\newcommand{\braces}[1]{\left \{ {#1} \right \}}
\newcommand{\ip}[2]{\left <{#1},{#2} \right >}
\tikzset{ offset def/.style={
    above left offset={-0.1,0.8},
    below right offset={4.2,-0.65},
  }, eqnbox/.style={
    offset def,
    set fill color=white!20!white!60,
    set border color=blue!60!cyan,
  }, offset def2/.style={
    above left offset={-0.1,0.8},
    below right offset={5,-0.65},
  },  eqnbox2/.style={
    offset def2,
    set fill color=white!20!white!60,
    set border color=blue!60!cyan,
  }
}
\title{On physical realizability for inverse structural designs:\\
bounding the least eigenvalue of an unknown mass \\matrix}
\author[1,2]{P. Cheema}
\author[3]{M. M. Alamadari}
\author[1]{G. A. Vio}
\affil[1]	{The University of Sydney, School of Aerospace, Mechanical, and Mechatronic Engineering, \NewLineAffil NSW 2006, Australia \NewAffil}
\affil[2]	{Data61 CSIRO, \NewLineAffil NSW 2015, Australia \NewAffil}
\affil[3]	{University of New South Wales, School of Civil and Environmental Engineering, \NewLineAffil Sydney,
NSW 2052, Australia}
\date{}
\begin{document}

%%%%%%%%%%%%
% Abstract %
%%%%%%%%%%%%

% Enter your abstract in the command below.

\abstract{In the field of structural engineering analysis, a common requirement is to calculate the modal frequencies of a structure that has undergone an update, either naturally (such as from material degradation), or due to man-made influences (by placing point masses along a structure). In addition to this requirement, it is common to only have access to truncated modal testing results. In this paper, we derive analytical bounds for the first eigenvalue of a completely unobserved mass matrix for linear elastic systems. Doing so allows engineers to proceed with modifying linear elastic systems, without requiring direct access to the mass matrix. This is because it is often difficult to know exactly what negative mass perturbations are allowable, given that the full mass matrix is an unknown quantity. Ultimately, the analysis in this paper will proceed by assuming only access to the left and right eigenvectors of the underlying system - which are both possible to obtain via physical experiments, so that the bounds are not only physically realizable, but also practically realizable.}

\maketitle

%%%%%%%%%%%%%%%%%%%%%%%%%%
% Beginning of the paper %
%%%%%%%%%%%%%%%%%%%%%%%%%%

\section{Introduction}

The field of structural design modification requires altering the properties of a system, and then observing the subsequent effects upon the eigenspectrum. The forward problem of structural modification is well-understood, as it involves observing the eigenvalues which result from a prescribed modification \cite{sestieri2000structural,cheema2020new}. The inverse problem however, is less so. In particular, inverse structural modification involves  determining the set of permissible modifications which generate a prescribed eigenspectrum \cite{sestieri2000structural,he2001structural}. Generally, inverse design problems can be approached from the perspective of analytical modeling \cite{he2001structural,navd2007structural,gladwell1999inverse,mottershead1993model}, passive modeling  \cite{ouyang2015passive}, and or active modeling \cite{inman2001active}. Regardless of the chosen method, most work to constrain the mass modifications as \textit{being positive only}. This is because positive constraints help ensure that the derived answer is physically realizable. This is because typically one does not have access to the full mass matrix, meaning that it is difficult to know the precise extent to which negative mass modifications can be physically made. However, this means that the total space of achievable configurations is much smaller than what it possibly could be. In regards to trying to solve this problem, some attempts have been made to understand the lower bounds on finite element system mass matrices \cite{wathen1987realistic}, as well as stiffness matrices \cite{hu1998bounds}. Moreover, a lot of work on bounding the eigenvalues of a modified system based on perturbations has been done Ram \& Braun wherein they find upper and lower bounds for the eigenvalues of a modified structure based on a truncated modal system \cite{ram1990structural}. However, all such methods require at least full access to a full mass matrix, which limits the applicability of such methods to more realistic settings. Ultimately, it will be the purpose of this paper to demonstrate a method for obtaining a lower bound on the least eigenvalue of the modified mass matrix for a linear-elastic system, even if both the original mass and stiffness matrices are \textit{completely unobserved}. In order to achieve these bounds, only access to the left, and right eigenvectors  of the original system will be required - which can all be obtained from practical experimentation \cite{bucher1997left}.

% In addition, Djoudi et al. \cite{djoudi2002inverse} developed inverse methods for eigenvalue assignment for pin-jointed truss networks, and Bucher \& Braun \cite{bucher1993structural} provide an exact inverse solution which assumes that the allowable set of modifications must span a particular pre-defined subspace, and Braun \& Ram \cite{braun2001modal}. In addition to this, various inverse structural modeling methods have been proposed which rely upon using the frequency response function (FRF). For example Park \& Park \cite{park2000structure} exploited the the FRF of substructures in order to solve the inverse problem. Mottershead et al. \cite{mottershead2001inverse} explored a problem which involves assigning vibrational nodes on a structure based purely on measured receptances \cite{mottershead2001structural}. Another particularly novel approach was explored by Ouyang \& Zhang \cite{ouyang2015passive} in which they overcome the commonly cited problem of \textit{frequency spill-over} (modification of the eigenvalues in the lower end of the spectrum, effecting eigenvalues in the higher end of the spectrum), by exploiting the notion of gradient flow \cite{chu2004gradient} which involves recasting the matrix completion problem as the minimization of the distance between two iso-spectral matrices.

\section{Problem Set-Up}
Consider an $n\in\mathbb{Z}$ degree of freedom (dof) linear-elastic system as follows: 
\begin{align}
\bm{M}\bm{\ddot{x}} + \bm{K}\bm{x} = \bm{0}
\end{align}
where $\bm{M}\in \mathbb{R}^{n\times n}$ is known as the \textit{mass matrix}, $\bm{K}\in \mathbb{R}^{n\times n}$ is known as the \textit{stiffness matrix}, and $\bm{x}\in \mathbb{R}^{n}$ is a state vector. If the solutions of $\bm{M}\bm{\ddot{x}} + \bm{K}\bm{x} = \bm{0}$ are sinusoidal in nature, then it is equivalent to solving the following generalized eigenvalue problem: 
\begin{align}
(\bm{K} - \lambda_i \bm{M})\bm{v_i} = \bm{0}
\end{align}
where $i = 1,2,...,n$. For this system, $\bm{v}_i\in\mathbb{R}^n$ denotes the $i$-th eigenvector, and $\lambda_i\in\mathbb{R}$ is the $i$-th eigenvalue. It is possible to assemble these $n$ eigenvalue equations into a single matrix form, by constructing $\bm{\Lambda}\in\mathbb{R}^{n\times n}$, and $\bm{V}\in\mathbb{R}^{n\times n}$, where, $\bm{\Lambda} = \textrm{diag} (\lambda_i)_{n \times n}$, and $\bm{V} = [\bm{v_1}, \bm{v_2}, ..., \bm{v_n}]$. This results in the following system:
\begin{align}\label{eqn:VK_MVLambda}
    \bm{K}\bm{V} = \bm{M}\bm{V}\bm{\Lambda}
\end{align}
in which $\bm{\Lambda}$ is known as the \textit{spectral matrix}, and $\bm{V}$ is the \textit{modal matrix}. In this paper it is assumed that the eigenvectors are mass normalized, meaning, $\bm{V}^{\intercal}\bm{M}\bm{V} = \bm{I}$, and $\bm{V}^{\intercal}\bm{K}\bm{V} = \bm{\Lambda}$, where $\bm{I}$ denotes the identity matrix. Moreover, the notation: $(\bm{M},\bm{K})$ will be used to denote the eigenvalue problem for the mass-stiffness system implied by matrices $\bm{M}$, and $\bm{K}$ (that is, Equation \ref{eqn:VK_MVLambda}). From this notation, the corresponding system for: $(\bm{I},\bm{M})$ is defined as the following: 
\begin{align}\label{eqn:mass_system}
    (\bm{M}-w_i \bm{I})\bm{\phi}_i = \bm{0}
\end{align}
which has $i$-th eigenvalue $w_i$, and eigenvector $\bm{\phi}_i$ respectively. This is an important construction for this paper, as we are concerned with \textit{bounding the least eigenvalue} of an unobserved mass matrix in order to impose constraints when performing inverse modeling. In terms of inverse design, consider now the application of some alteration to the $(\bm{M},\bm{K})$ system. This may manifest through a design change, a change in material properties, or even a change in the new operating conditions, and they will be denoted by: $\Delta\bm{M}$ and $\Delta\bm{K}$. Applying these to the $(\bm{M},\bm{K})$ system, we arrive at the $(\bm{M}+\Delta\bm{M},\bm{K}+\Delta\bm{K})$ eigenvalue problem,
\begin{equation} \label{eqn:full_sys}
(\bm{K} + \Delta \bm{K})\bm{V}^{\star} = (\bm{M} + \Delta \bm{M})\bm{V}^{\star}\bm{\Lambda}^{\star}
\end{equation}
where $\bm{\Lambda}^{\star},\bm{V}^{\star}\in\mathbb{R}^{n\times n}$  denote the new spectral, and modal matrices for the modified system respectively. Ultimately, the aim here will be the following:

\textbf{Aim:} \textit{Under what conditions is it possible to have negative values enter into $\Delta \bm{M}$, such that the underlying $(\bm{M}+\Delta\bm{M},\bm{K}+\Delta\bm{K})$ system remains physically realizable?}

In order to answer this question, it will be necessary to define: (i) A notion of physical realizability, and (ii) Conditions under which physical realizability is plausible. In order to make the problem amenable to practical settings, we will assume no access to the $\bm{M}$, and $\bm{K}$ matrices.

\section{Establishing Physical Realizability}

In the absence of the $\bm{M}$, and $\bm{K}$ matrices, it is difficult to know what are the allowable negative values allowed for $\Delta \bm{M}$. In order to help approach this problem, it is necessary to (i) Establish a notion of physical realizability, and (ii) Develop bounds based on the prescribed definition of physical realizability. Therefore, this section works to provide an intuitive mathematical statement to capture the notion of physical realizability, based on system kinetic energy (KE). This idea is made clear in Definition \ref{def:KE}, and the mathematical implication of this definition is made clear in Remark \ref{lem:G}.

\begin{defn}\label{def:KE} A linear mass-spring system that is claimed as being physically realizable requires at least, to possess positive KE.
\end{defn}

\begin{remark} \label{lem:G} An  $(\bm{M},\bm{K})$ system has positive KE if the eigenvalues of the $(\bm{I},\bm{M})$ system are positive. That is, if $w_i>0$ for all $i = 1,...,n$, then $\text{KE}\left[(\bm{M},\bm{K})\right]>0$.
\end{remark}

\begin{proof}
As shown in Equation \ref{eqn:mass_system}, the eigenpairs for the mass-only system: $(\bm{I},\bm{M})$, are $(w_i, \bm{\phi}_i)$, with eigenvalues: $w_i$, and eigenvector: $\bm{\phi}_i$. Consider the following diagonalization:
\begin{align*} 
\bm{M} = \bm{\Phi}^{-1} \bm{D \Phi}, 
\end{align*} 
where $\bm{\Phi}=[\bm{\phi}_1, \ldots, \bm{\phi}_n]$ is the matrix of eigenvector columns and $\bm{D} = \text{diag}(w_1,\ldots,w_n)$.  Since $\braces{\bm{\phi}_i}_{i=1}^n$ forms a basis over $\mathbb{R}^n$, any $\dot{\bm{x}} \in \mathbb{R}^n$ velocity vector can be expressed as $\dot{\bm{x}} = \sum_{i=1}^n a_i \bm{\phi}_i$, where $a_i\in\mathbb{R}$ are constants. Now, $\forall \dot{\bm{x}} \in \mathbb{R}^N$, $\text{KE}\left[(\bm{M},\bm{K})\right]$, w.r.t. $\dot{\bm{x}}$ can be defined as follows:  
\begin{align} \text{KE}\left[(\bm{M},\bm{K});\dot{\bm{x}}\right] &\coloneqq \frac{1}{2} \ip{\dot{\bm{x}}}{\bm{M}\dot{\bm{x}}} \nonumber\\ &= \ip{\sum_{i=1}^N a_i \bm{\phi}_i}{\sum_{j=1}^N a_j \bm{M \phi}_j}\nonumber\\ &\stackrel{(i)}{=} \sum_{i=1}^N \sum_{j=1}^N a_i a_j \ip{\bm{\phi}_i}{ w\bm{\phi}_j} \\ &\stackrel{(ii)}{=}\sum_{i=1}^N w_i a_i^2 \nonumber\\  & > 0, \text{ if $w_i > 0$.} \nonumber 
\end{align} 
where (i) follows from linearity of the inner product, and (ii) follows from the orthogonality property of eigenvectors.
\end{proof}

\textbf{Summary: } \textit{What Remark \ref{lem:G} says, is that if the eigenvalues of $\bm{M}$ are positive, then the corresponding $(\bm{M},\bm{K})$ system is physically realizable through a KE argument. In other words, if $\bm{M}$ is positive definite (denoted: $\bm{M} \succ 0$) then the system is physically realizable. We will use this idea to guide our lower bounds.}

\section{The Eigenvalue Lower Bound based on the Left Eigenvector}

In this section, we will establish a very general lower-bound on the first eigenvalue of an unobserved mass matrix, based only on the first left, and right eigenvectors of the $(\bm{M},\bm{K})$ system. The underlying motivation to this section will be based on Weyl's theorem of interlacing eigenvalues as show in Theorem \ref{thm:Weyl}.

\begin{theorem}\textbf{[Weyl's Theorem \cite{zheng2020inertia}]}\label{thm:Weyl}
Let $\bm{A,B} \in \mathbb{R}^{n\times n}$ be Hermitian matrices. Then:
\begin{align} \label{eqn:Weyl}
    \lambda_i(\bm{A}) + \lambda_1(\bm{B}) \leq \lambda_i(\bm{A+B}) \leq \lambda_i(\bm{A}) + \lambda_n(\bm{B})
\end{align}
    \end{theorem}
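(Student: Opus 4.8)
The plan is to derive Weyl's two inequalities from the Courant--Fischer min--max characterization of eigenvalues. Throughout, order eigenvalues increasingly, $\lambda_1(\bm{A}) \le \cdots \le \lambda_n(\bm{A})$ (these are real since $\bm{A}$ is Hermitian, in fact real symmetric here), and write $R_{\bm{A}}(\bm{x}) = \ip{\bm{x}}{\bm{A}\bm{x}} / \ip{\bm{x}}{\bm{x}}$ for the Rayleigh quotient. The key fact I would invoke (citing it, or proving it in a couple of lines from the spectral theorem) is
\[
\lambda_i(\bm{A}) \;=\; \min_{\substack{S \subseteq \mathbb{R}^n \\ \dim S = i}} \; \max_{\substack{\bm{x} \in S,\ \bm{x} \neq \bm{0}}} R_{\bm{A}}(\bm{x}),
\]
together with the elementary Rayleigh bound $\lambda_1(\bm{A}) \le R_{\bm{A}}(\bm{x}) \le \lambda_n(\bm{A})$ for all $\bm{x} \neq \bm{0}$.

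First I would prove the right-hand inequality $\lambda_i(\bm{A+B}) \le \lambda_i(\bm{A}) + \lambda_n(\bm{B})$. Let $S^\star$ be an $i$-dimensional subspace attaining the minimum above for $\lambda_i(\bm{A})$, so that $R_{\bm{A}}(\bm{x}) \le \lambda_i(\bm{A})$ for every nonzero $\bm{x} \in S^\star$. Since the Rayleigh quotient is additive in the matrix, for each such $\bm{x}$,
\[
R_{\bm{A+B}}(\bm{x}) \;=\; R_{\bm{A}}(\bm{x}) + R_{\bm{B}}(\bm{x}) \;\le\; \lambda_i(\bm{A}) + \lambda_n(\bm{B}).
\]
Taking the maximum over $\bm{x} \in S^\star$ and then using that $\lambda_i(\bm{A+B})$ is the minimum over \emph{all} $i$-dimensional subspaces (hence no larger than the value realised on the particular subspace $S^\star$) gives $\lambda_i(\bm{A+B}) \le \lambda_i(\bm{A}) + \lambda_n(\bm{B})$.

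Then the left-hand inequality drops out by a sign flip rather than a second argument: apply what was just proved to the Hermitian pair $(\bm{A+B},\,-\bm{B})$, using $(\bm{A+B}) + (-\bm{B}) = \bm{A}$ and $\lambda_n(-\bm{B}) = -\lambda_1(\bm{B})$. This yields $\lambda_i(\bm{A}) \le \lambda_i(\bm{A+B}) - \lambda_1(\bm{B})$, i.e.\ $\lambda_i(\bm{A}) + \lambda_1(\bm{B}) \le \lambda_i(\bm{A+B})$, as required.

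The only substantive ingredient is the Courant--Fischer characterization; everything after it is a one-line Rayleigh-quotient estimate plus the $\bm{B} \mapsto -\bm{B}$ reduction, so that is where I would expect any difficulty to lie. If one wishes to be self-contained and not cite min--max as a black box, the fallback is to establish just the single direction used here directly from the eigendecompositions of $\bm{A}$ and of $\bm{A+B}$: pick $S^\star$ to be the span of the first $i$ eigenvectors of $\bm{A}$, and for the outer minimization note that any competing $i$-dimensional subspace meets the span of the last $n-i+1$ eigenvectors of $\bm{A+B}$ nontrivially by dimension counting --- this is the one place a genuine (if standard) argument is needed.
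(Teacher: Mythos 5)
Your proof is correct: the Courant--Fischer argument for the upper bound followed by the $\bm{B}\mapsto-\bm{B}$ sign flip is the standard derivation of Weyl's inequalities, and every step (the additivity of the Rayleigh quotient, the bound $\lambda_1(\bm{B})\le R_{\bm{B}}(\bm{x})\le\lambda_n(\bm{B})$, and the identity $\lambda_n(-\bm{B})=-\lambda_1(\bm{B})$ under the increasing ordering) checks out. There is nothing in the paper to compare against --- the authors state Weyl's theorem as a quoted result with a citation and give no proof --- so your write-up simply supplies what the paper omits; the only point worth flagging is that your increasing-order convention must be (and is) consistent with the paper's usage, where $\lambda_1$ denotes the least eigenvalue.
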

For the context of this paper, we can let $\bm{A}=\bm{M}$, and $\bm{B}=\Delta \bm{M}$\footnote{Note that Weyl's inequality applies for Hermitian systems, and for most cases $\bm{M}$ and $\bm{K}$ are indeed Hermitian matrices.}. Now from observing Equation \ref{eqn:Weyl}, it can be seen that if $i=1$ is selected, then we have: $\lambda_1(\bm{M}) + \lambda_1(\Delta \bm{M}) \leq \lambda_1(\bm{M}+\Delta \bm{M})$. Since positive KE is desired (Remark \ref{lem:G}), it can be seen that we need: $\lambda_1(\bm{M}+\Delta \bm{M})>0$, in turn requiring that: $\lambda_1(\Delta \bm{M})< -\lambda_1(\bm{M})$. This provides the maximal possible constraint for allowing $\Delta \bm{M}$ to have negative values in the matrix: it needs to be smaller than the magnitude of the least eigenvalue of the mass matrix; which is a rather intuitive conclusion. The main problem with this conclusion, however, is that \textit{we do not have access to $\bm{M}$}! Thus, we need to develop some way to implicitly obtain information from the mass matrix. This can be achieved with knowledge of the left eigenvectors. In fact it can be shown that $\bm{g}_i = \bm{M}\bm{v}_i$, where $\bm{g}_i$, and $\bm{v}_i$, are the $i$-th left, and right eigenvectors respectively. This is clarified in Lemma \ref{lem:LR}.

\begin{lem}\textbf{[Left-Right Eignevector Relationship]} \label{lem:LR}
For the $(\bm{M},\bm{K})$ system, $\bm{g}_i = \bm{M}\bm{v}_i$, where $\bm{g}_i$, and $\bm{v}_i$, are the $i$-th left, and right eigenvectors respectively.
\end{lem}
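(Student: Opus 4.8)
The plan is to work directly with the matrix identity \ref{eqn:VK_MVLambda} together with the mass-normalization convention, rather than manipulating individual eigen-equations. Since $\bm{M}\succ0$ is invertible (per the summary following Remark \ref{lem:G}), Equation \ref{eqn:VK_MVLambda} rewrites as $\bm{M}^{-1}\bm{K}\bm{V} = \bm{V}\bm{\Lambda}$, so the columns $\bm{v}_i$ of $\bm{V}$ are exactly the right eigenvectors of the standard operator $\bm{A} := \bm{M}^{-1}\bm{K}$. The left eigenvectors $\bm{g}_i$ are, by definition, the vectors with $\bm{g}_i^{\intercal}\bm{A} = \lambda_i\bm{g}_i^{\intercal}$; stacking them as columns of a matrix $\bm{G}$, this is the statement $\bm{G}^{\intercal}\bm{A} = \bm{\Lambda}\bm{G}^{\intercal}$. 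Comparing with $\bm{V}^{-1}\bm{A}\bm{V} = \bm{\Lambda}$, i.e. $\bm{V}^{-1}\bm{A} = \bm{\Lambda}\bm{V}^{-1}$, identifies the rows of $\bm{V}^{-1}$ with the left eigenvectors, normalized to the bi-orthonormal convention $\bm{g}_i^{\intercal}\bm{v}_j = \delta_{ij}$; hence $\bm{G}^{\intercal} = \bm{V}^{-1}$.

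The second step uses mass-normalization: from $\bm{V}^{\intercal}\bm{M}\bm{V} = \bm{I}$ we get $\bm{V}^{-1} = \bm{V}^{\intercal}\bm{M}$, and therefore $\bm{G} = (\bm{V}^{-1})^{\intercal} = \bm{M}^{\intercal}\bm{V} = \bm{M}\bm{V}$, where the last equality uses the symmetry $\bm{M} = \bm{M}^{\intercal}$. Reading this column by column gives $\bm{g}_i = \bm{M}\bm{v}_i$, which is the claim. As an independent consistency check one can verify the left-eigenvector equation directly: transposing $\bm{g}_i^{\intercal}\bm{M}^{-1}\bm{K} = \lambda_i\bm{g}_i^{\intercal}$ and invoking symmetry of $\bm{M}$ and $\bm{K}$ yields $\bm{K}\bm{M}^{-1}\bm{g}_i = \lambda_i\bm{g}_i$, and substituting $\bm{g}_i = \bm{M}\bm{v}_i$ collapses the left-hand side to $\bm{K}\bm{v}_i$, which equals $\lambda_i\bm{M}\bm{v}_i = \lambda_i\bm{g}_i$ by Equation \ref{eqn:VK_MVLambda}.

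The point requiring care is not a deep obstacle but a bookkeeping one: "left eigenvector" here must be understood as the left eigenvector of the \emph{standard} operator $\bm{M}^{-1}\bm{K}$ — for the symmetric pencil $(\bm{M},\bm{K})$ the left and right eigenvectors would coincide, so the nontrivial content of the lemma is precisely the factor $\bm{M}$ introduced by passing to $\bm{M}^{-1}\bm{K}$. I would state explicitly that the argument uses (i) invertibility of $\bm{M}$, (ii) symmetry of $\bm{M}$ and $\bm{K}$, and (iii) the mass-normalization $\bm{V}^{\intercal}\bm{M}\bm{V} = \bm{I}$ to fix the scaling; with distinct eigenvalues the per-vector identity $\bm{g}_i = \bm{M}\bm{v}_i$ is then exact (for repeated eigenvalues it holds up to a choice of orthonormal basis within each eigenspace).
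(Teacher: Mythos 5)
Your proof is correct, but it takes a genuinely different route from the paper's. The paper makes an ansatz $\bm{g}_i = \bm{A}\bm{v}_i$, substitutes it into the left-eigenvector equation $\bm{g}_i^{\intercal}(\bm{M}^{-1}\bm{K}) = \lambda_i\bm{g}_i^{\intercal}$, transposes, and observes that $\bm{A}=\bm{M}$ reduces the resulting equation $\bm{K}\bm{M}^{-1}\bm{A}\bm{v}_i = \lambda_i\bm{A}\bm{v}_i$ back to the original pencil $\bm{K}\bm{v}_i = \lambda_i\bm{M}\bm{v}_i$ --- essentially your ``consistency check'' promoted to the whole proof. You instead argue globally: from $\bm{M}^{-1}\bm{K}\bm{V}=\bm{V}\bm{\Lambda}$ the left eigenvectors are the rows of $\bm{V}^{-1}$, and mass-normalization $\bm{V}^{\intercal}\bm{M}\bm{V}=\bm{I}$ gives $\bm{V}^{-1}=\bm{V}^{\intercal}\bm{M}$, hence $\bm{G}=\bm{M}\bm{V}$. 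Your version buys something the paper's does not: left eigenvectors are only defined up to scale, so the literal equality $\bm{g}_i=\bm{M}\bm{v}_i$ (rather than proportionality) only holds under a specific normalization, and your argument identifies exactly which one (the bi-orthonormal convention $\bm{g}_i^{\intercal}\bm{v}_j=\delta_{ij}$ induced by $\bm{V}^{\intercal}\bm{M}\bm{V}=\bm{I}$). The paper's claim that $\bm{A}=\bm{M}$ is ``the only choice'' is not strictly true --- any nonzero scalar multiple of $\bm{M}$ satisfies the same verification --- so your explicit handling of the scaling, and your caveat about repeated eigenvalues, tightens a point the paper glosses over. Both proofs rely on the same ingredients (invertibility and symmetry of $\bm{M}$ and $\bm{K}$); the paper's is shorter and more elementary, yours is more complete.
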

\begin{proof}
Recall $(\bm{M},\bm{K})\coloneqq \left(\bm{Kv}_i=\lambda_i\bm{Mv}_i\right)$. From this definition, the right-eigenvector system can be expressed as: $\bm{\left(M^{-1}K\right)v}_i=\lambda_i\bm{v}_i$, and the left-eigenvector system can be exressed as: $ \bm{g}_i^{\intercal}\bm{\left(M^{-1}K\right)}=\lambda_i\bm{g}_i^{\intercal}$. Assume there exists an $\bm{A}$, such that $\bm{g}_i = \bm{Av}_i$, so that $\bm{v}_i$, and $\bm{g}_i$ are linearly related. The aim will be to identify what form of $\bm{A}$ is acceptable. Substituting  $\bm{g}_i = \bm{Av}_i$, into the left-eigenvector equation gives:
\begin{alignat}{2}
       &\qquad& (\bm{Av}_i)^{\intercal}(\bm{M}^{-1}\bm{K}) &= \lambda_i (\bm{Av}_i)^{\intercal} \nonumber\\
       \stackrel{(i)}{\iff} && (\bm{M}^{-1}\bm{K})^{\intercal}\bm{Av}_i &= \lambda_i (\bm{Av}_i) \nonumber\\
        \iff && (\bm{K}^{\intercal})(\bm{M}^{\intercal})^{-1}\bm{Av}_i &= \lambda_i (\bm{Av}_i) \nonumber\\
        \iff && \bm{KM}^{-1}\bm{Av}_i &= \lambda_i (\bm{Av}_i) \label{eqn:last_step}
\end{alignat}
where (i) follows from transposing both sides. By observing Equation \ref{eqn:last_step}, the only choice of $\bm{A}$ which will satisfy the $(\bm{M},\bm{K})$ system is $\bm{A} =\bm{M}$.
\end{proof}

Since it is known that the left eigenvector can be obtained experimentally \cite{bucher1997left}, we therefore have a means to implicitly obtain information about the unobserved mass matrix, $\bm{M}$ in practical settings. This relationship can be exploited to then form a lower bound on the smallest eigenvalue of the mass matrix (without ever needing to observe it). This is shown in Theorem \ref{thm:mine}. However, in order to establish Theorem \ref{thm:mine}, it is necessary to establish Remark \ref{rem:3a}.

\begin{remark}  \label{rem:3a}
For eigenvalue system $(\bm{I},\bm{M})$, the modified eigenvalue problem of: $(\bm{I},(\bm{M} - \alpha \bm{I})^{-1})$ has corresponding eigenvalues $\frac{1}{w_i - \alpha}$,  where $\alpha \in \mathbb{R}$ is a scalar.
\end{remark}
\begin{proof}
Firstly if: $\bm{M}\bm{\phi}_i=w_i\bm{\phi}_i$, then $\bm{M}^{-1}\bm{\phi}_i=\frac{1}{w_i}\bm{\phi}_i$. Moreover, if $\bm{M}\bm{\phi}_i=w_i\bm{\phi}_i$, then $(\bm{M}-\alpha\bm{I})\bm{\phi}_i=(w_i-\alpha)\bm{\phi}_i$. From these two observations, the statement of Remark \ref{rem:3a} follows (the eigenvalues are inverted, and translated by $\alpha$).
\end{proof}

We now proceed to the main theorem of this paper, as outlined in Theorem \ref{thm:mine}.

\begin{theorem}\textbf{[Generalized Eigenvalue Lower Bound]} \label{thm:mine}
Let $\text{eig}\left[(\bm{I},\bm{M})\right] \coloneqq \{w_i\}_1^n$. Then $\forall \bm{x} \in \mathbb{R}^n,$ such that $\|\bm{x}\| \neq 0,$, and  $\forall \alpha \in \mathbb{R}$ such that $|w_1 - \alpha| < |w_2 - \alpha|$ we have,

\begin{equation}\label{eqn:Thm_bound}
\tikzmarkin[eqnbox]{a}
    w_1 \geq \alpha - \frac{\|\bm{x}\| \| \bm{\tilde{g}} - \alpha \bm{\tilde{v}}\| }{\ip{\bm{x}}{\bm{\tilde{v}}} },
    \tikzmarkend{a}
\end{equation}

where $\bm{\tilde{g}}$, and $\bm{\tilde{v}}$ are the left and right eigenvectors corresponding to $\min (\text{eig}\left[ (\bm{M},\bm{K})\right])$.  

\end{theorem}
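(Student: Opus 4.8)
The plan is to convert the claim about $w_1$ into a Rayleigh-type estimate for the Hermitian matrix $\bm M-\alpha\bm I$, in which the right eigenvector $\bm{\tilde v}$ plays the role of the test vector and the slack in the bound comes from the arbitrary $\bm x$.

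First I would use Lemma~\ref{lem:LR}: since $\bm{\tilde g}$ and $\bm{\tilde v}$ are the left and right eigenvectors of $(\bm M,\bm K)$ attached to $\min(\text{eig}[(\bm M,\bm K)])$, we have $\bm{\tilde g}=\bm M\bm{\tilde v}$, hence $\bm{\tilde g}-\alpha\bm{\tilde v}=(\bm M-\alpha\bm I)\bm{\tilde v}$. This is the only place the left eigenvector is needed; after it, everything is expressed through $\bm M$ and the known vectors $\bm{\tilde v}$, $\bm{\tilde g}$, $\bm x$. Next I would expand $\bm{\tilde v}=\sum_{i=1}^n c_i\bm\phi_i$ in the orthonormal eigenbasis $\{\bm\phi_i\}_{i=1}^n$ of $\bm M$ (the $(\bm I,\bm M)$ system of Equation~\ref{eqn:mass_system}); by the computation inside Remark~\ref{rem:3a}, each $\bm\phi_i$ is an eigenvector of $\bm M-\alpha\bm I$ with eigenvalue $w_i-\alpha$, so Parseval gives $\|\bm{\tilde g}-\alpha\bm{\tilde v}\|^2=\sum_i(w_i-\alpha)^2c_i^2$. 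Reading the hypothesis $|w_1-\alpha|<|w_2-\alpha|$ as the assertion that $|w_1-\alpha|=\min_i|w_i-\alpha|$, I would bound this sum below by $(w_1-\alpha)^2\sum_ic_i^2=(w_1-\alpha)^2\|\bm{\tilde v}\|^2$, obtaining $\|\bm{\tilde v}\|\le\|\bm{\tilde g}-\alpha\bm{\tilde v}\|/|w_1-\alpha|$.

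The final step is to pair with $\bm x$: Cauchy--Schwarz gives $\ip{\bm x}{\bm{\tilde v}}\le\|\bm x\|\,\|\bm{\tilde v}\|\le\|\bm x\|\,\|\bm{\tilde g}-\alpha\bm{\tilde v}\|/|w_1-\alpha|$, so $|w_1-\alpha|\le\|\bm x\|\,\|\bm{\tilde g}-\alpha\bm{\tilde v}\|/\ip{\bm x}{\bm{\tilde v}}$ whenever $\ip{\bm x}{\bm{\tilde v}}>0$; discarding the absolute value on the left in the direction that weakens the inequality then yields Equation~\ref{eqn:Thm_bound}.

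I expect the genuine obstacle to be the eigenvalue-modulus step rather than any computation: one must justify that the near-multiplicity hypothesis really forces $|w_1-\alpha|$ to be the globally smallest gap (so that it controls the operator norm $\|(\bm M-\alpha\bm I)^{-1}\|_2$ implicit in Remark~\ref{rem:3a}), that $\bm M$ is genuinely Hermitian so the $\bm\phi_i$ are orthonormal, and that $\alpha\notin\{w_i\}$ so $\bm M-\alpha\bm I$ is invertible (the borderline case $\alpha=w_1$ collapses the bound to the vacuous $w_1\ge w_1$). A secondary point is the sign of $\ip{\bm x}{\bm{\tilde v}}$ in the denominator: the estimate is cleanest when stated with $|\ip{\bm x}{\bm{\tilde v}}|$, and the freedom in choosing $\bm x$ makes the positivity restriction harmless in practice.
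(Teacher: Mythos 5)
Your proposal is correct and follows essentially the same route as the paper: Cauchy--Schwarz on $\ip{\bm x}{\bm{\tilde v}}$, the identity $\bm{\tilde g}-\alpha\bm{\tilde v}=(\bm M-\alpha\bm I)\bm{\tilde v}$ from Lemma~\ref{lem:LR}, the spectral bound $\|\bm{\tilde v}\|\le\|(\bm M-\alpha\bm I)\bm{\tilde v}\|/|w_1-\alpha|$, and the final rearrangement under the assumption $\ip{\bm x}{\bm{\tilde v}}>0$. The only cosmetic difference is that you obtain the spectral bound by expanding $\bm{\tilde v}$ in the eigenbasis of $\bm M$ and applying Parseval, whereas the paper inserts $(\bm M-\alpha\bm I)^{-1}(\bm M-\alpha\bm I)$ and evaluates $\|(\bm M-\alpha\bm I)^{-1}\|$ as a spectral radius via Remark~\ref{rem:3a} --- the same fact proved two ways, and the caveats you flag (that the ordering of the $w_i$ makes $|w_1-\alpha|$ the global minimum gap, invertibility of $\bm M-\alpha\bm I$, and the sign of the denominator) are equally implicit in the paper's version.
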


\begin{proof}
Consider,
\begin{align}
     \ip{\bm{x}}{\bm{\tilde{v}}} &\stackrel{(i)}{\leq} \|\bm{x}\| \|\bm{\tilde{v}}\| \nonumber \\
                                 &= \|\bm{x}\| \|(\bm{M}-\alpha \bm{I})^{-1}(\bm{M}-\alpha \bm{I})\bm{\tilde{v}}\| \nonumber\\
                                 &\stackrel{(ii)}{\leq} \|\bm{x}\| \|(\bm{M}-\alpha \bm{I})^{-1} \| \|(\bm{M}-\alpha \bm{I})\bm{\tilde{v}}\|. \nonumber
\end{align}

where (i) follows from the Cauchy-Schwarz inequality, and (ii) from the notion that induced operator norms are sub-multiplicative. In particular, the vector norm $\| \cdot \|: \mathbb{R}^n \rightarrow \mathbb{R}$, is known to induce the following operator norm,
\begin{equation}
    \|\bm{A} \| \coloneqq \sup \{ \|\bm{A} \bm{x} \| : \forall \bm{x}\in\mathbb{R}^n, \|\bm{x}\| = 1 \},
\end{equation}
for all such operators $\bm{A}\in\mathbb{R}^{n \times n}$ \cite{chu2005inverse}. In addition, the operator norm relates to the spectral radius of the underlying operator in the following way: $\|\bm{A}\|= \rho(\bm{A})$, where  $\rho(\bm{A}) \coloneqq \max(|\text{eig}\left[(\bm{I},\bm{A})\right]|)$ is the spectral radius \cite{chu2005inverse}. Thus,
\begin{align*}
      \|(\bm{M}-\alpha \bm{I})^{-1} \| &\coloneqq  \rho((\bm{M}-\alpha \bm{I})^{-1}) \\ 
                                       &\stackrel{(i)}{=} \max(|\text{eig}\left[(\bm{M}-\alpha \bm{I})^{-1}) \right] |) \\
                                       &\stackrel{(ii)}{=} \frac{1}{\min (|w -\alpha|) } \\
                                       &\stackrel{(iii)}{=} \frac{1}{|w_1 -\alpha|}, 
\end{align*}
where (i) Follows from definition of the spectral radius, (ii) Is established from Remark \ref{rem:3a}, and (iii) Holds as long as $|w_1 - \alpha| < |w_2 - \alpha|$. Ultimately, we have found a way to re-write the $\|(\bm{M}-\alpha \bm{I})^{-1} \|$ expression. Therefore we can write, 
\begin{alignat}{2}
       &\qquad& \ip{\bm{x}}{\bm{\tilde{v}}} &\leq \frac{\|\bm{x} \| \| (\bm{M}\bm{\tilde{v}}-\alpha \bm{\tilde{v}}) \|}{|w_1 -\alpha|} \nonumber \\
       \iff && |w_1 -\alpha| &\leq \frac{\|\bm{x} \| \| (\bm{M}\bm{\tilde{v}}-\alpha \bm{\tilde{v}}) \|}{\ip{\bm{x}}{\bm{\tilde{v}}}},
\end{alignat}
where it assumed that $\ip{\bm{x}}{\bm{\tilde{v}}} > 0$. Finally, we notice that $\bm{\tilde{g}} = \bm{M}\bm{\tilde{v}}$ (established from Lemma \ref{lem:LR}), and expanding the absolute value, we obtain   
\begin{equation*}
    w_1 \geq \alpha - \frac{\|\bm{x}\| \| \bm{\tilde{g}} - \alpha \bm{\tilde{v}}\| }{\ip{\bm{x}}{\bm{\tilde{v}}} }.
\end{equation*}
\end{proof}
\textbf{Summary: }\textit{We have established a bound on the least eigenvalue ($w_1$) of a completely unobserved mass matrix, $\bm{M}$. This bound varies as a function of the scalar, $\alpha$, and requires access to only the left and right system eigenvectors, $\bm{\tilde{g}}$, and $\bm{\tilde{v}}$. However, there is ``arbitrary'' $\bm{x}$ left over. The next section will show how to remove $\bm{x}$.}

\section{Practical Considerations}

Since $\bm{x}$ is arbitrary, it is possible to select it as being parallel to $\bm{\tilde{v}}$, so that its presence can be eliminated from Theorem \ref{thm:mine} altogether. This is made clear in Corollary \ref{cor:mine}, which removes $\bm{x}$, whilst studying the case of $w_1 \geq 0$ (the central concern of this paper).

\begin{cor}\textbf{[Appropriate $\bm{\alpha}$ Range]}\label{cor:mine}

\begin{align}
\tikzmarkin[eqnbox2]{b}
    \alpha \geq \frac{\| \bm{\tilde{g}} - \alpha \bm{\tilde{v}}\| }{\|\bm{\tilde{v}}\|}, \quad \text{ for  } w_1\geq 0. 
\end{align}
\tikzmarkend{b}
\end{cor}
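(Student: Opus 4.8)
The plan is to specialize the bound established in Theorem~\ref{thm:mine},
\[
w_1 \geq \alpha - \frac{\|\bm{x}\|\,\|\bm{\tilde{g}} - \alpha\bm{\tilde{v}}\|}{\ip{\bm{x}}{\bm{\tilde{v}}}},
\]
to a convenient choice of the free vector $\bm{x}$, exploiting that $\bm{x}$ is arbitrary subject only to $\|\bm{x}\|\neq 0$ and $\ip{\bm{x}}{\bm{\tilde{v}}}>0$ (the conditions used in the proof of that theorem). First I would take $\bm{x} = \bm{\tilde{v}}$ — equivalently any positive multiple of $\bm{\tilde{v}}$, i.e.\ $\bm{x}$ parallel to $\bm{\tilde{v}}$, since the scaling cancels between numerator and denominator. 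I would then verify admissibility: $\bm{\tilde{v}}$ is an eigenvector of the $(\bm{M},\bm{K})$ system and hence nonzero, so $\|\bm{\tilde{v}}\|\neq 0$; and $\ip{\bm{\tilde{v}}}{\bm{\tilde{v}}} = \|\bm{\tilde{v}}\|^2 > 0$, so the hypotheses on $\bm{x}$ are met and the quotient is well defined.

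With this substitution the ratio $\|\bm{x}\|/\ip{\bm{x}}{\bm{\tilde{v}}}$ collapses to $\|\bm{\tilde{v}}\|/\|\bm{\tilde{v}}\|^2 = 1/\|\bm{\tilde{v}}\|$, so the bound reads
\[
w_1 \geq \alpha - \frac{\|\bm{\tilde{g}} - \alpha\bm{\tilde{v}}\|}{\|\bm{\tilde{v}}\|}.
\]
To secure the physical-realizability requirement $w_1 \geq 0$ it then suffices to force the right-hand side to be nonnegative, i.e.\ $\alpha - \|\bm{\tilde{g}} - \alpha\bm{\tilde{v}}\|/\|\bm{\tilde{v}}\| \geq 0$, which rearranges immediately to the claimed inequality. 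Implicitly the restriction $|w_1-\alpha| < |w_2-\alpha|$ inherited from Theorem~\ref{thm:mine} is still assumed to hold for the admissible $\alpha$.

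I do not expect a genuine analytic obstacle here: the argument is essentially a one-line substitution followed by a sign check. The only points that need care are (i) confirming that $\bm{x}=\bm{\tilde{v}}$ is an admissible rather than degenerate choice, and (ii) stating clearly that the result is a \emph{sufficient} condition on $\alpha$ guaranteeing $w_1\geq 0$ through the Theorem~\ref{thm:mine} bound, not a characterization of $w_1$ itself. If one wished to go further, the implicit inequality $\alpha\|\bm{\tilde{v}}\|\geq\|\bm{\tilde{g}}-\alpha\bm{\tilde{v}}\|$ could be squared (legitimate once $\alpha\geq 0$, which the inequality itself forces) to obtain the explicit threshold $\alpha \geq \|\bm{\tilde{g}}\|^2/\bigl(2\ip{\bm{\tilde{g}}}{\bm{\tilde{v}}}\bigr)$; but the corollary as stated only requires the implicit form.
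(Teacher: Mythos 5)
Your proposal is correct and takes essentially the same route as the paper's own proof: fix the free vector $\bm{x}$ to be parallel to $\bm{\tilde{v}}$ in the bound of Theorem~\ref{thm:mine} (the paper phrases this as setting $\cos\theta_{\bm{x},\bm{\tilde{v}}}=1$) and then require the resulting lower bound on $w_1$ to be nonnegative. Your added care about this being a \emph{sufficient} condition for $w_1\geq 0$, and the optional squared reformulation, are harmless refinements that do not change the argument.
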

\begin{proof}

Consider, 
\begin{align*}
\alpha - \frac{\|\bm{x}\| \| \bm{\tilde{g}} - \alpha \bm{\tilde{v}}\| }{ \ip{\bm{x}}{\bm{\tilde{v}}} } &=  \alpha - \frac{\|\bm{x}\| \| \bm{\tilde{g}} - \alpha \bm{\tilde{v}}\| }{ \|\bm{x}\| \|\bm{\tilde{v}}\| \cos{\theta_{\bm{x},\bm{\tilde{v}} } } } \\
&= \alpha - \frac{ \| \bm{\tilde{g}} - \alpha \bm{\tilde{v}}\| } { \|\bm{\tilde{v}}\| \cos{\theta_{\bm{x},\bm{\tilde{v}} } }},
\end{align*}
where $\theta_{\bm{x},\bm{\tilde{v}} }$ denotes the angle between vectors $\bm{x}$ and $\bm{\tilde{v}}$. Since we require $w_1\geq 0$, this implies that in turn we are looking at the case, \begin{align*} \alpha \geq \frac{ \| \bm{\tilde{g}} - \alpha \bm{\tilde{v}}\| } { \|\bm{\tilde{v}}\| \cos{\theta_{\bm{x},\bm{\tilde{v}} } }}. \end{align*} Since $\bm{x}$ can be arbitrarily chosen, select $\bm{x}$ so as to run parallel to $\bm{\tilde{v}}$, thereby making $\cos{\theta_{\bm{x},\bm{\tilde{v}} } }=1$. Thus, $\bm{x}$ becomes completely removed from this expression, completing the proof.
\end{proof}

Therefore we have access to a lower bound as a function of $\alpha$, using \textit{only} the left and right eigenvectors. However notice that unfortunately this definition is implicit, in that $\alpha$ appears on both the left, and right-hand sides. In order to establish an expression with $\alpha$ only on one side, some assumptions will need to be made. These will be clarified in Remark \ref{rem:extend}.

\begin{remark}\textbf{[Simplified $\bm{\alpha}$ Range]}\label{rem:extend}
If $w_1 \geq 0$, and $|w_1 - \alpha| < |w_2 - \alpha|$, then,  \begin{align}\alpha \geq \frac{1}{2}\rho(\bm{M}).\end{align} 
\end{remark}
\begin{proof}
\begin{align}
    \frac{ \| \bm{\tilde{g}} - \alpha \bm{\tilde{v}}\| } { \|\bm{\tilde{v}}\| } &= \frac{ \| (\bm{M} - \alpha\bm{I}) \bm{\tilde{v}}\| } { \|\bm{\tilde{v}}\| }\nonumber\\
    &\leq \frac{ \| (\bm{M} - \alpha\bm{I})\| \| \bm{\tilde{v}}\| } { \|\bm{\tilde{v}}\| }\nonumber\\
    &=  \| (\bm{M} - \alpha\bm{I})\| \nonumber\\
    &= \rho(\bm{M} - \alpha\bm{I}) 
\end{align}
where $\bm{v} \neq 0$. From the proof thus far we have by implication: $\alpha - \frac{ \| \bm{\tilde{g}} - \alpha \bm{\tilde{v}}\| } { \|\bm{\tilde{v}}\| } \geq \alpha -  \rho(\bm{M} - \alpha\bm{I})$. From Corollary \ref{cor:mine} it is known that $\alpha - \frac{ \| \bm{\tilde{g}} - \alpha \bm{\tilde{v}}\| } { \|\bm{\tilde{v}}\| }\geq 0$, if $w_1 \geq 0$, and so by transitivity it is natural to require that $\alpha -  \rho(\bm{M} - \alpha\bm{I})\geq 0$. The given condition of $|w_1 - \alpha| < |w_2 - \alpha|$ means that we can expand as follows: $\alpha -  \rho(\bm{M} - \alpha\bm{I})=2\alpha -\rho(\bm{M})$, which in turn implies that $\alpha \geq \frac{1}{2}\rho(\bm{M})$, thus completing the proof. 

% Since we are looking to keep increasing $\alpha$ from below, such that it stays closer to $w_1$ than $w_2$, that is $|w_1 - \alpha| < |w_1 - \alpha|$, we can expand $ \rho(\bm{M} - \alpha\bm{I})$ as $\rho(\bm{M}) - \alpha$. Thus, so far we know that when $w_1\geq 0$ we have: (i) $\alpha \geq \| \bm{\tilde{g}} - \alpha \bm{\tilde{v}}\| /\| \bm{\tilde{v}}\|$, and, (ii) $\| \bm{\tilde{g}} - \alpha \bm{\tilde{v}}\| /\| \bm{\tilde{v}}\|\leq \rho(\bm{M}) - \alpha$. Now, assuming that the overlap region between the two inequalities \textit{is not too large} allows us to state that: $\alpha \geq \rho(\bm{M}) - \alpha \iff \alpha \geq \frac{1}{2}\rho(\bm{M})$, as required.
\end{proof}

Ultimately, Remark \ref{rem:extend} tries to make clear that a reasonable value to choose for $\alpha$ in order to best estimate the value of an (unobserved) $w_1$ is that of: $\alpha = \frac{1}{2}\rho(\bm{M})$. This is because from Corollary \ref{cor:mine} we know that $\alpha - \frac{ \| \bm{\tilde{g}} - \alpha \bm{\tilde{v}}\| } { \|\bm{\tilde{v}}\| }\geq 0$ in order to ensure that $w_1\geq 0$, and the value $\alpha=\frac{1}{2}\rho(\bm{M})$ appears to also satisfy this condition (Remark \ref{rem:extend}). However, once again it seems that we have run into a problem! This $\alpha$ value still requires access to $\bm{M}$! In particular, to the access of $\rho(\bm{M})$. Luckily however, $\rho(\bm{M})$ (the largest eigenvalue in this case) is much easier to estimate than least eigenvalue, $w_1$, because matrix reconstruction procedures tend to be much more strongly driven by the presence of the \textit{larger} eigenvalues (i.e. the larger singular values). For example, since we already have the relationship: $\bm{g}_i= \bm{Mv}_i$ (Lemma \ref{lem:LR}), one could pose an estimate for $\bm{M}$ as: $\bm{M}\approx \bm{g}_i\bm{v}_i^{\dagger}$, where the dagger symbol denotes the pseudo-inverse. This estimation of $\bm{M}$ is much more strongly driven by the larger singular values, and in fact, it can be shown that the largest singular value of this pseudo-inverse estimate for $\bm{M}$, is upper-bounded by the largest singular value for the full $\bm{M}$. This idea is clarified in Lemma \ref{lem:sigma1}. In practice this means as one collects more left and right eigenvectors, one can obtain increasingly better estimates of $\alpha$.

\begin{lem}\label{lem:sigma1}
If $\bm{g} = \bm{Mv}$, then $\sigma_1(\bm{g} \bm{v}^{\dagger})\leq \sigma_1(\bm{M})\coloneqq\rho(\bm{M})$, where $\sigma_1(\bm{M})$ denotes the largest singular value of $\bm{M}$.
\end{lem}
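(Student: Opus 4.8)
The plan is to exploit the fact that $\bm{g}\bm{v}^{\dagger}$ is a rank-one matrix (it is an outer product, since $\bm{v}^{\dagger}$ is a row vector for a nonzero column vector $\bm{v}$), so its only nonzero singular value equals its spectral norm, and the spectral norm is submultiplicative. First I would write $\bm{v}^{\dagger} = \bm{v}^{\intercal}/\|\bm{v}\|^2$, which is the standard pseudo-inverse of a nonzero column vector, so that $\bm{g}\bm{v}^{\dagger} = \bm{g}\bm{v}^{\intercal}/\|\bm{v}\|^2$. Substituting $\bm{g} = \bm{M}\bm{v}$ gives $\bm{g}\bm{v}^{\dagger} = \bm{M}\,\bigl(\bm{v}\bm{v}^{\intercal}/\|\bm{v}\|^2\bigr) = \bm{M}\bm{P}$, where $\bm{P} := \bm{v}\bm{v}^{\intercal}/\|\bm{v}\|^2$ is the orthogonal projector onto $\mathrm{span}(\bm{v})$.

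Next I would use that for a rank-one (or indeed any) matrix the largest singular value coincides with the operator norm induced by the Euclidean norm, $\sigma_1(\bm{A}) = \|\bm{A}\|$, which is already invoked in the proof of Theorem \ref{thm:mine}. Then submultiplicativity of the operator norm yields $\sigma_1(\bm{M}\bm{P}) = \|\bm{M}\bm{P}\| \leq \|\bm{M}\|\,\|\bm{P}\|$. Since $\bm{P}$ is an orthogonal projector, $\|\bm{P}\| = 1$, so $\sigma_1(\bm{g}\bm{v}^{\dagger}) \leq \|\bm{M}\| = \sigma_1(\bm{M})$. Finally, for the symmetric positive (semi-)definite mass matrix $\bm{M}$ the singular values coincide with the eigenvalues, so $\sigma_1(\bm{M}) = \rho(\bm{M})$, matching the claimed identification in the statement.

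The only real subtlety — and the step I would treat most carefully — is the identification $\|\bm{P}\| = 1$ and the reduction to the rank-one case: one must note $\bm{v} \neq \bm{0}$ for $\bm{v}^{\dagger}$ to be given by $\bm{v}^{\intercal}/\|\bm{v}\|^2$, and one should remark that $\bm{v}\bm{v}^{\intercal}$ has operator norm $\|\bm{v}\|^2$ (its sole nonzero eigenvalue), so that $\bm{P}$ indeed has unit norm. Everything else is a direct application of submultiplicativity and the operator-norm facts already established earlier in the paper; there is no genuine obstacle beyond bookkeeping. An alternative, essentially equivalent route is to observe directly that for any unit vector $\bm{u}$, $\|\bm{M}\bm{P}\bm{u}\| = \|\bm{M}(\bm{P}\bm{u})\| \leq \|\bm{M}\|\,\|\bm{P}\bm{u}\| \leq \|\bm{M}\|$, taking the supremum over $\bm{u}$; I would present whichever phrasing reads most cleanly alongside the existing proofs.
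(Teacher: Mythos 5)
Your proof is correct and follows essentially the same route as the paper's: both expand the pseudo-inverse as $\bm{v}^{\intercal}/\|\bm{v}\|^{2}$, substitute $\bm{g}=\bm{Mv}$, and apply submultiplicativity of the induced operator norm together with the identification of that norm with $\sigma_1$. Your grouping of $\bm{v}\bm{v}^{\intercal}/\|\bm{v}\|^{2}$ into a unit-norm orthogonal projector is just a slightly tidier bookkeeping of the same chain of inequalities that the paper writes out factor by factor.
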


\begin{proof}
Consider that $\bm{g} \bm{v}^{\dagger} =  \bm{g} (\bm{v}^{\intercal}\bm{v})^{-1}\bm{v}^{\intercal}$ by definition of the pseudo-inverse. Thus, 
\begin{align}
    \|\bm{g} (\bm{v}^{\intercal}\bm{v})^{-1}\bm{v}^{\intercal}\| &\leq  \|\bm{g}\|\cdot  \|(\bm{v}^{\intercal}\bm{v})^{-1}\| \cdot \|\bm{v}^{\intercal}\| \nonumber \\
   &= \|\bm{Mv}\|\cdot  \|\|\bm{v}\|^{-2}\| \cdot \|\bm{v}^{\intercal}\|\nonumber \\
   &\leq  \|\bm{M}\| \cdot \|\bm{v}\| \cdot  \|\bm{v}\|^{-2} \cdot \|\bm{v}^{\intercal}\| \nonumber\\
   &= \|\bm{M}\|\cdot\|\bm{v}\|^2 \cdot  \|(\bm{v})\|^{-2}  \nonumber \\
   &= \|\bm{M}\|\nonumber
\end{align}

Thus, $\|\bm{g} \bm{v}^{\dagger}\| \leq \|\bm{M}\|$. Moreover the induced operator norm of a matrix evaluates to its largest singular value (the spectral radius in the case of symmetric positive definite matrices) implying that, $\sigma_1(\bm{g} \bm{v}^{\dagger})\leq \sigma_1(\bm{M})=\rho(\bm{M})$ which completes the proof. 
\end{proof}

\textbf{Summary: } \textit{We have established conditions under which the bound of Theorem \ref{thm:mine} can be practically used. These conditions are developed by observing how $\alpha$ behaves around the required region of $w_1 \geq 0$. It seems that a good reference for $\bm{\alpha}$ is to be around $\frac{1}{2}\rho(\bm{M})$, and in turn, $\rho(\bm{M})$ can be approximated from knowledge that $\bm{M}\approx \bm{gv}^{\dagger}$.}

\section{Examples}

The aim of this section will be to demonstrate the usage of the developed lower-bounds. Ultimately, it will be shown that a reasonable value for $w_1$ can be inferred, without ever knowing the full mass matrix. In particular, consider the following mass and stiffness matrices,
\begin{align*}
     \bm{M}_1 = \left(\begin{matrix}15&0&0&0&0\\0&21&0&0&0\\0&0&24&0&0\\0&0&0&27&0\\0&0&0&0&30\end{matrix}\right) \quad  \bm{M}_2 = \left(\begin{matrix}30&0&0&0&0\\0&170&0&0&0\\0&0&180&0&0\\0&0&0&190&0\\0&0&0&0&200\end{matrix}\right)
\end{align*}
\begin{align*}
     \bm{K} = \left(\begin{matrix}k_1&-k_1&0&0&0\\-k_1&k_1+k_2&-k_2&0&0\\0&-k_2&k_2+k_3&-k_3&0\\0&0&-k_3&k_3+k_4&-k_4\\0&0&0&-k_4&k_5\end{matrix}\right)  
\end{align*}

where $k_i = 1000i$ for $i=\{1,..,5\}$. For the ensuing analysis, assume that we do not know these matrices, and that we only have access to the total $(\bm{M},\bm{K})$ system via the left and right eigenvectors only. We will name the mass systems which are being analyzed as: $\bm{M}_1$, and $\bm{M}_2$ respectively. The result of analysing systems $\bm{M}_1$ and $\bm{M}_2$ are shown in Figures \ref{fig:sys1}, and \ref{fig:sys2} respectively.

From Figure \ref{fig:sys1} it can be seen that the suggested procedure of (i) Using the approximation: $\bm{M'}\approx \bm{g_1v_1}^{\dagger}$\footnote{Note that in Figures \ref{fig:sys1}, and \ref{fig:sys2} the dagger symbol is replaced by ``T''.}, and then (ii) Estimating the corresponding $\alpha= \frac{1}{2}\rho(\bm{M'})$ value seems to coincide very closely to the $w_1=0$ point. This is a nice conclusion, but unfortunately it is not a very useful one, as trivially we already expect that $w_1\geq 0$ for the mass matrices. However, if one uses a \textit{more informed} version of $\alpha= \frac{1}{2}\rho(\bm{M'})$ by constructing the $\bm{M'}$ estimate by means of collecting more left and right eigenvectors, then the lower-bound on $w_1$ can be shown to increase. Interestingly, it would appear that in the case of system $\bm{M}_2$ if one has access to \textit{all} the eigenvectors then the approximation precisely coincides to the point at which the theory of the bound begins to collapse. This is the point where $|w_1 -\alpha | = |w_2 - \alpha|$, or in other words, where the value chosen for $\alpha$ is equidistant from $w_2$ and $w_1$. However in system $\bm{M}_1$, even with information of all the left and right eigenvectors, the lower-bound seems to undershoot this point. On this point, if one truly had all information on the left and right eigenvectors then practically speaking, there is no reason to even use this bound - as in principle the mass matrix could be completely reconstructed. However, this analysis was performed here to show the behavior of the lower-bound in such an extreme regime. Ultimately, this lower-bound was analyzed in three distinct regimes. That is, in the presence of: (i) The least information possible (only $v_1$, and $g_1$ are known), (ii) A moderate amount of information ($v_{1:3}$, and $g_{1:3}$), as well as (iii) Complete information ($v_{1:5}$, $g_{1:5}$). Indeed, this bound was derived with the purpose of being used in the presence of partial information (that is, in points (i) and (ii)), and from the toy problems it would appear that the case of having access to only $v_{1:3}$, and $g_{1:3}$ provides reasonable estimates for the lower-bound on the least eigenvalue, $w_1$. The value for the lower-bounds turned out to be 6.8 for system $\bm{M}_1$, and 18.22 for system $\bm{M}_2$ in this case. These systems had true unobserved eigenvalues of 15, and 30 respectively.

\begin{figure}[H]
    \centering
    \begin{subfigure}{.65\textwidth}
    \includegraphics[width=\linewidth]{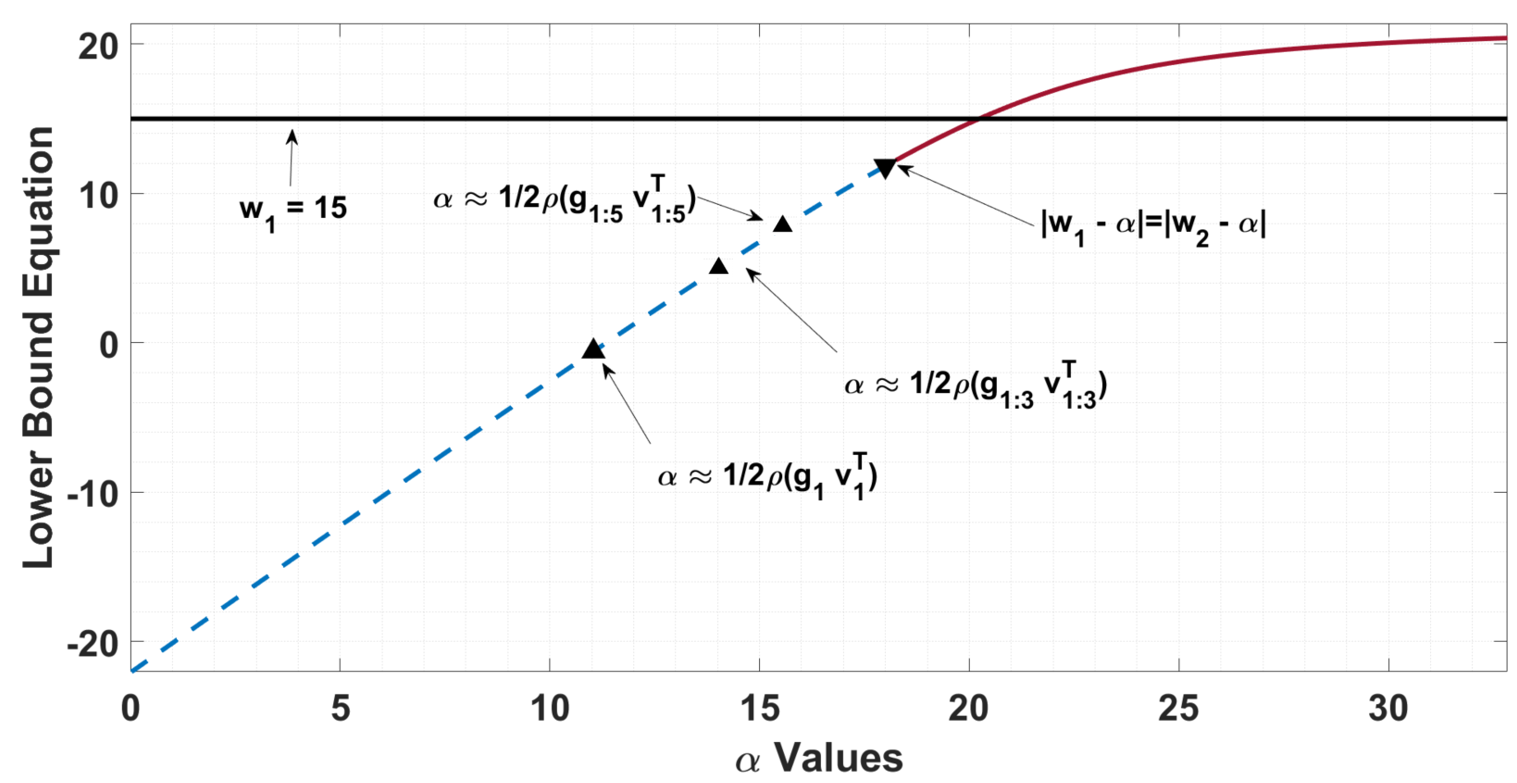}
    \caption{Analysis for system $\bm{M}_1$.}
    \label{fig:sys1}
    \end{subfigure}
    
    \begin{subfigure}{.65\textwidth}
    \includegraphics[width=\linewidth]{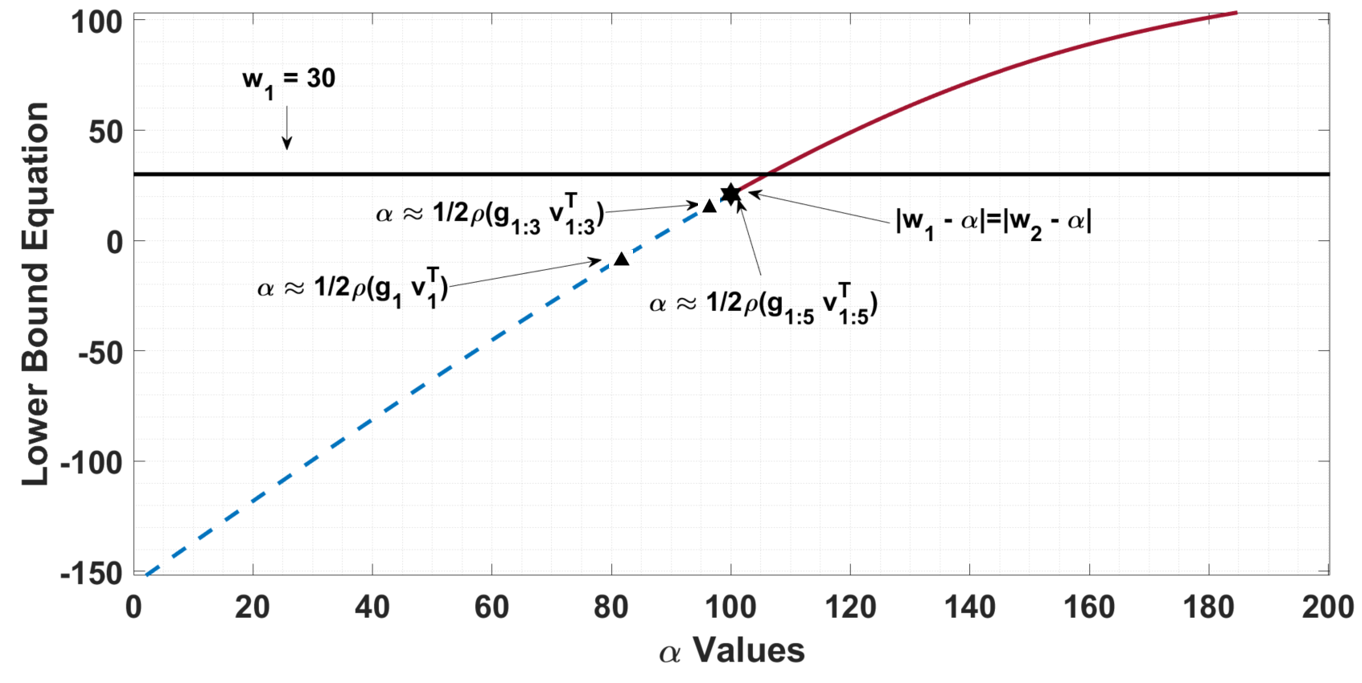}
    \caption{Analysis for system $\bm{M}_2$.}
    \label{fig:sys2}
    \end{subfigure}
    \caption{Varying $\alpha$ to see which $F(\alpha)$ value is able to best provide a lower-bound estimate for $w_1$ in each system, where $F(\alpha)=\alpha - \frac{\|\bm{\tilde{g}}-\alpha \bm{\tilde{v}}\|}{\|\bm{\tilde{v}}\|}$. Note that as more information is obtained (that is, more left and right eigenvectors are obtained), the lower-bound becomes increasingly better. In these figures, the blue dash represents the part of $F(\alpha)$ which has the condition $|w_1 -\alpha| < |w_2-\alpha|$ satisfied.}
\end{figure}

\section{Conclusion}
A general lower-bound (which varies as a function of a scalar, $\alpha$), was developed for the least eigenvalue of the mass matrix in linear-elastic systems that have undergone a negative perturbation. This lower-bound was developed in the absence of knowledge about the mass and stiffness matrices, and only requires to the first left, and right eigenvectors of the system - which are quantities that can be established from physical experiments. The range of $\alpha$ values which result in positive mass eigenvalues upon perturbing the system was also developed. An example of how to use these bounds was shown on two mass-spring systems. A reasonable value of $\alpha$ arose when considering the spectral radius of an estimated mass matrix. However, further research should be conducted in studying different ways to obtain optimal $\alpha$ values for even tighter lower bounds. Moreover, the toy problems studied here clearly indicate that a curvature change is present in the region of $|w_2-\alpha| < |w_1-\alpha|$. This implies that further investigations could also be performed to better anticipate the onset of the second least eigenvalue, $w_2$ (with respect to this ensuing curvature), thereby leading to a more complete understanding of partially observed mass-spring systems.

\bibliography{References}
\end{document}